\numberwithin{equation}{section}
\newtheorem{theorem}{Theorem}[section]
\newtheorem{lemma}[theorem]{Lemma}
\newtheorem{corollary}[theorem]{Corollary}
\theoremstyle{definition}
\newtheorem{example}[theorem]{Example}
\begin{document}


\newcommand{\del}{\operatorname{del}}
\newcommand{\lk}{\operatorname{lk}}
\newcommand{\reg}{\operatorname{reg}}
\newcommand{\pd}{\operatorname{pd}}

 
\title[]
{A note on the van der Waerden complex}
\thanks{Submitted Version: July 24, 2017}

\author{Becky Hooper}
\address{Department of Mathematics and Statistics\\
McMaster University, Hamilton, ON, L8S 4L8}
\curraddr{759  Hillside Rd., Albert Bridge, NS, B1K 3H7}
\email{hooperb@mcmaster.ca,becky9997@hotmail.com}
 
\author{Adam Van Tuyl}
\address{Department of Mathematics and Statistics\\
McMaster University, Hamilton, ON, L8S 4L8}
\email{vantuyl@math.mcmaster.ca}
 
\keywords{van der Waerden complex, vertex decomposable, 
shellable, Cohen-Macaulay}
\subjclass[2000]{05E45,13F55}

\begin{abstract}
Ehrenborg, Govindaiah, Park, and Readdy recently introduced
the van der Waerden complex, 
a pure simplicial complex whose facets correspond to arithmetic
progressions.  Using techniques from combinatorial commutative
algebra, we classify when these pure simplicial complexes
are vertex decomposable or not Cohen-Macaulay.  As a corollary,
we classify the van der Waerden complexes that are shellable. 
\end{abstract}
 
\maketitle


\section{Introduction} 

Let $V = \{x_1,\ldots,x_n\}$ and suppose that $0 < k < n$.  The
{\it van der Waerden complex} of dimension $k$ on $n$
vertices, denoted ${\tt vdW}(n,k)$, is the pure simplicial
complex on $V$ whose facet set is given by 
\[{\tt vdW}(n,k) = 
\left\langle \{x_i,x_{i+d},x_{i+2d},\allowbreak \ldots,x_{i+kd}\}
~|~ d\in\mathbb{Z} ~~\mbox{with $1\leq i<i+kd\leq n$} \right\rangle.\]
In other words, the facets of ${\tt vdW}(n,k)$ correspond to
all arithmetic progressions of length $k+1$ whose largest element is
less than or equal to $n$.   The complexes ${\tt vdW}(n,k)$ 
were introduced by Ehrenborg, Govindaiah, Park, and Readdy \cite{EGPR}
as part of a recent program to study the topology of complexes that arise
within number theory.   In particular, the work of \cite{EGPR} focused on the
homotopy type of ${\tt vdW}(n,k)$.

The van der Waerden complex is a pure simplicial complex.
It is known that pure simplicial complexes may have additional
combinatorial and topological properties, e.g., vertex
decomposable, shellable, and Cohen-Macaulay.  Specifically, we have
the following chain of implications (definitions are postponed
until the next section):
\[
\mbox{vertex decomposable} \Rightarrow
\mbox{shellable} \Rightarrow
\mbox{Cohen-Macaulay} \Rightarrow \mbox{pure}.\]
In general, these implications are all strict.  It is natural
to ask when ${\tt vdW}(n,k)$ has these additional
properties in terms of $n$ and $k$.    
We answer this question in this note; precisely:

\begin{theorem}\label{maintheorem}
Let  $0<k<n$ be integers.   Then 
\begin{enumerate}
\item[$(i)$] ${\tt vdW}(n,k)$ is vertex decomposable if and only if 
\begin{enumerate}
\item[$\bullet$] $n \leq 6$, or 
\item[$\bullet$] $n > 6$ and $k=1$, or
\item[$\bullet$] $n> 6$ and $\frac{n}{2} \leq k <n$.
\end{enumerate}
\item[$(ii)$] ${\tt vdW}(n,k)$ is pure but not Cohen-Macaulay  
if and only if $n > 6$ and $2 \leq k < \frac{n}{2}$.
\end{enumerate}
\end{theorem}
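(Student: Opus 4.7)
The theorem has a positive part (vertex decomposability in three cases in (i)) and a negative part (failure of Cohen--Macaulayness in the complementary range in (ii)). Since VD $\Rightarrow$ shellable $\Rightarrow$ CM $\Rightarrow$ pure, proving (ii) automatically supplies the ``only if'' direction of (i), so I would split the work along these lines.

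For the positive cases of (i), my plan is as follows. When $k=1$, every pair $\{x_i,x_j\}$ with $i<j$ is a two-term AP with $d=j-i$, so ${\tt vdW}(n,1)$ is the $1$-skeleton of the $(n-1)$-simplex, which is vertex decomposable by the standard skeleton argument. When $n/2 \leq k < n$, the defining inequality $1+kd \leq n$ combined with $k \geq n/2$ forces $d=1$, so the facets are precisely the consecutive windows $\{x_i,x_{i+1},\ldots,x_{i+k}\}$ for $1 \leq i \leq n-k$. I would then verify by induction on $n$ that $x_n$ is a shedding vertex: $\lk(x_n)$ is the $(k-1)$-simplex on $\{x_{n-k},\ldots,x_{n-1}\}$, $\del(x_n)$ equals ${\tt vdW}(n-1,k)$ (still in the same regime), and the shedding condition is immediate. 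For $n \leq 6$ only finitely many pairs remain, with only $(n,k)=(5,2)$ and $(6,2)$ requiring more than a trivial argument.

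For part (ii), the plan is to apply Reisner's criterion: it suffices to exhibit a face $F \in {\tt vdW}(n,k)$ whose link is disconnected and has dimension at least $1$, so that $\tilde H_0(\lk F) \neq 0$ sits strictly below $\dim \lk F$. Since the link of a $j$-face has dimension $k-1-j$, I would look for $F$ with $\dim F \leq k-2$. The parameter range $2 \leq k < n/2$ guarantees the simultaneous existence of facets with common difference $d=1$ and facets with $d \geq 2$, and this spread in common differences is what drives the disconnection. My candidate for $F$ is an interval-like face whose containing facets partition, by their common difference, into groups producing vertex-disjoint edges in the link; the small case $(n,k)=(7,2)$ is already illustrative, as $\lk_{{\tt vdW}(7,2)}(\{x_4\})$ consists of the $4$-cycle coming from progressions with $d \leq 2$ together with the isolated edge $\{x_1,x_7\}$ from the unique $d=3$ progression.

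\textbf{The main obstacle} is producing the disconnecting face $F$ uniformly across all $(n,k)$ with $n > 6$ and $2 \leq k < n/2$. The boundary $k$ near $n/2$ is the most delicate, since few facets with $d \geq 2$ are available; I expect the argument to first isolate an outlier facet of large common difference and then choose $F$ inside that outlier so that the edges from short-step APs never reach its remaining vertices in the link.
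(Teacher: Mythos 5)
Your positive half is essentially the paper's own argument: for $k=1$ the complex is a connected one\--dimensional complex (so vertex decomposable), for $\frac{n}{2}\le k<n$ the increment is forced to be $1$ and one inducts on $n$ using $x_n$, whose link is a simplex and whose deletion is ${\tt vdW}(n-1,k)$, and the only sporadic cases are $(5,2)$ and $(6,2)$. Your reduction of the ``only if'' directions to part (ii) via the implication chain is also the paper's logic. So part (i) is fine and matches the paper.

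The gap is in part (ii), and it is exactly the step you flag as ``the main obstacle'': you never produce, for general $n>6$ and $2\le k<\frac{n}{2}$, a face whose link is disconnected, and this is the heart of the theorem, not a routine verification. The natural candidates fail. For instance, if $F$ is a facet of maximal increment $d$ and you take $F_0$ to be its middle $k-1$ vertices, then for $k\ge 5$ every facet containing $F_0$ must again have increment $d$ (any increment $d'$ with $d=md'$, $m\ge 2$, cannot fit $k+1$ terms around $F_0$), so the facets containing $F_0$ are at most three translates of $F$ along the residue class and the link of $F_0$ is a path --- connected, hence no Reisner obstruction. For $k=2$ the maximal increment itself can fail: in ${\tt vdW}(9,2)$ with $F=\{x_1,x_5,x_9\}$ and $F_0=\{x_1\}$, the edge $\{x_5,x_9\}$ attaches to the edge $\{x_3,x_5\}$ coming from $d'=2$ (the $d=2d'$ phenomenon), and the disconnection is instead produced by the largest \emph{odd} increment. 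In general, any successful choice of $F_0$ inside an outlier facet $F$ requires showing that the facets containing $F_0$ split into groups whose traces on the link share no vertex, and that forces you to control $|F\cap G|$ both for facets $G$ of different increment (this is precisely the content of the paper's Lemmas \ref{mainlemma1} and \ref{mainlemma2}) and for same-increment translates of $F$, which can share up to $k$ vertices; none of this analysis appears in your plan. For comparison, the paper does not use Reisner's criterion at all: it uses the Eagon--Reiner characterization, showing via those two intersection lemmas that the Stanley--Reisner ideal of the Alexander dual has a first syzygy between a maximal-(odd-)increment facet and any other facet that cannot be expressed through linear first syzygies, so the dual ideal has no linear resolution. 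Your topological route is plausible (e.g.\ $F_0=\{x_1\}$ with $F=\{x_1,x_{1+d},x_{1+2d}\}$, $d$ the largest odd increment, works for $k=2$, and suitable initial segments of a maximal-increment facet appear to work for $k\ge 3$), but as written the uniform construction and its verification --- the actual content of (ii) --- are missing.
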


As a corollary, we can recover a result of \cite{H}
first proved using different techniques.

\begin{corollary}\label{maincor}
Let $0<k<n$ be integers.  Then ${\tt vdW}(n,k)$ is shellable if and only if
\begin{itemize}
\item $n\leq6$, or
\item $n>6$ and $k=1$, or
\item $n>6$ and $\frac{n}{2}\leq k<n$.
\end{itemize}
\end{corollary}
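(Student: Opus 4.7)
The plan is to deduce the corollary directly from Theorem \ref{maintheorem} using the chain of implications
\[
\text{vertex decomposable} \Rightarrow \text{shellable} \Rightarrow \text{Cohen-Macaulay} \Rightarrow \text{pure}
\]
stated in the introduction. Since ${\tt vdW}(n,k)$ is always pure by construction, parts $(i)$ and $(ii)$ of Theorem \ref{maintheorem} together describe a dichotomy: every pair $(n,k)$ with $0<k<n$ falls into exactly one of the two cases, namely either $(n,k)$ is in the vertex decomposable list (call this the \emph{good} case) or $n>6$ with $2\leq k<n/2$ (the \emph{bad} case, where pure but not Cohen-Macaulay).

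In the good case, ${\tt vdW}(n,k)$ is vertex decomposable by Theorem \ref{maintheorem}(i), and hence shellable by the first implication in the chain. In the bad case, ${\tt vdW}(n,k)$ fails to be Cohen-Macaulay by Theorem \ref{maintheorem}(ii); since shellability would imply Cohen-Macaulayness (the second implication, valid here because the complex is pure), the complex cannot be shellable.

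The only step requiring care is verifying that the good and bad cases partition the set $\{(n,k)\mid 0<k<n\}$. For $n\leq 6$, the good case applies. For $n>6$, the good case is $k=1$ or $k\geq n/2$, and the bad case is $2\leq k<n/2$, and these exhaust the range $1\leq k\leq n-1$. Thus every pair is covered, and the equivalence stated in the corollary follows.

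Since the two halves of Theorem \ref{maintheorem} together already supply both directions of the equivalence, there is no genuine obstacle here: the corollary is essentially a bookkeeping consequence of the main theorem combined with the standard implications among these properties.
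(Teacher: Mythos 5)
Your argument is correct and is essentially the paper's own proof: in the listed cases the complex is vertex decomposable by Theorem \ref{maintheorem}(i), hence shellable, and otherwise it is pure but not Cohen-Macaulay by Theorem \ref{maintheorem}(ii), hence not shellable. Your extra check that the two cases partition $\{(n,k)\mid 0<k<n\}$ is a reasonable bit of bookkeeping but adds nothing beyond what the paper does implicitly.
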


\begin{proof}
If $k$ and $n$ satisfy the above conditions, then 
${\tt vdW}(n,k)$ is vertex decomposable by Theorem \ref{maintheorem}, 
and consequently, shellable.  Otherwise
${\tt vdW}(n,k)$ is not Cohen-Macaulay by Theorem \ref{maintheorem}, 
so it cannot
be shellable.
\end{proof}

Our paper is structured as follows.  We first
recall the relevant background in Section 2.  In Section 3
we prove Theorem \ref{maintheorem} 
using some tools from combinatorial commutative algebra.
In particular, to show that ${\tt vdW}(n,k)$ is not
Cohen-Macaulay, we will 
show that the Stanley-Reisner ideal of the Alexander dual of 
${\tt vdW}(n,k)$ has nonlinear first syzygies.

\noindent
{\bf Acknowledgments.} 
Parts of this paper appeared in the first author's M.Sc. project \cite{H}.  
The second author acknowledges the financial support of NSERC.


\section{Background}

In this section we recall the relevant combinatorial
and algebraic background.

Let $V = \{x_1,\ldots,x_n\}$ be a vertex set.  A {\it simplicial complex}
on $V$ is a subset $\Delta \subseteq 2^{V}$ such that $(a)$ if $F \in
\Delta$ and $G \subseteq F$, then $G \in \Delta$, and $(b)$ $\{x_i\} \in \Delta$
for all $i \in \{1,\ldots,n\}$.  Elements of $\Delta$ are called
{\it faces}, and maximal faces under inclusion are called {\it facets}.
If $F_1,\ldots,F_s$ is a complete list of facets of $\Delta$, we usually
write $\Delta = \langle F_1,\ldots,F_s \rangle$.   The {\it dimension}
of a face $F$, denoted $\dim(F)$, is $\dim(F) = |F|-1$.  The {\it dimension
of $\Delta$}, denoted $\dim \Delta$, is $\dim \Delta = \max\{\dim(F) ~|~
\mbox{$F$ a facet of $\Delta$}\}$.  A simplicial complex is 
{\it pure} if all its facets have the same dimension.

The {\it Alexander dual} of $\Delta$, denoted $\Delta^\vee$, is 
the simplicial complex whose facets are complements of the minimal 
non-faces of $\Delta$. That is,
$
\Delta^\vee = \{ V\backslash F ~|~  F\notin\Delta \}.
$

To any simplicial complex $\Delta$, the {\it Stanley-Reisner ideal}
of $\Delta$ is a monomial ideal $I_\Delta$ in the polynomial
ring $R = k[x_1,\ldots,x_n]$ where 
\[I_{\Delta} = \langle x_{i_1}x_{i_2}\cdots x_{i_t} ~|~ \{x_{i_1},\ldots,x_{i_t}\} 
\not\in \Delta \rangle.\]
The following result allows us to directly write out the minimal generators
of the Stanley-Reisner ideal of the Alexander dual of $\Delta$
from the facets of $\Delta$.

\begin{lemma}[{\cite[Corollary 1.5.5]{HH}}]\label{Idv}
Let $\Delta=\langle F_1,F_2\ldots,F_s \rangle$.  Then
\[ I_{\Delta^\vee} = \left\langle m_{F_1^c},\ldots,m_{F_s^c} \right\rangle 
~~\mbox{where $m_{F_i^c} = \displaystyle\prod_{x\notin F_i}x$.}\] 
\end{lemma}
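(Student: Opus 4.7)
The plan is to unwind the two layers of complementation built into the definitions and then read off the minimal monomial generators of $I_{\Delta^\vee}$ directly. First I would translate the Alexander-dual condition into a statement about $\Delta$, then apply the definition of the Stanley-Reisner ideal, and finally isolate the minimal generators using the fact that the $F_i$ form an antichain.

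The key translation is as follows: by definition, a subset $G \subseteq V$ belongs to $\Delta^\vee$ if and only if $G = V \setminus F$ for some $F \notin \Delta$, which is equivalent to $V \setminus G \notin \Delta$. Taking contrapositives, a subset $S \subseteq V$ fails to be a face of $\Delta^\vee$ precisely when $V \setminus S \in \Delta$. Plugging this into the definition of the Stanley-Reisner ideal shows that $I_{\Delta^\vee}$ is the monomial ideal generated by all squarefree monomials $x_S := \prod_{x \in S} x$ such that $V \setminus S \in \Delta$.

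Next I would extract the minimal generators from this possibly redundant list. If $V \setminus S \in \Delta$, then $V \setminus S$ is contained in some facet $F_i$, so $V \setminus F_i \subseteq S$, and hence $m_{F_i^c}$ divides $x_S$. This already shows that $m_{F_1^c}, \ldots, m_{F_s^c}$ generate $I_{\Delta^\vee}$. For minimality, note that no facet of $\Delta$ is properly contained in another, so no $V \setminus F_i$ is properly contained in another $V \setminus F_j$; equivalently, no $m_{F_j^c}$ properly divides $m_{F_i^c}$. Thus the list forms an antichain under divisibility and is an irredundant minimal generating set.

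The only real obstacle is bookkeeping: since both $\Delta^\vee$ and $I_{\Delta^\vee}$ are defined via \emph{non}-membership, it is easy to flip a complement by mistake. Recording the equivalence $S \notin \Delta^\vee \Leftrightarrow V \setminus S \in \Delta$ once at the outset and reusing it keeps the rest of the argument mechanical.
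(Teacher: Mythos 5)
Your proof is correct. The paper gives no argument of its own here---it simply cites \cite[Corollary 1.5.5]{HH}---and your unwinding of the two complementations (the equivalence $S \notin \Delta^\vee \Leftrightarrow V \setminus S \in \Delta$, followed by the observation that the sets $S$ with $V\setminus S \in \Delta$ are exactly the supersets of the complements $F_i^c$ of the facets) is essentially the standard proof of that corollary; the concluding antichain remark even establishes minimality of the generating set, which is more than the lemma as stated requires.
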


We recall three families of pure simplicial complexes. 
The first family was introduced by Provan and Billera \cite{PB};
a pure simplicial complex $\Delta$ on $V$ is {\it vertex decomposable}
if 
\begin{enumerate}
\item[$(i)$] $\Delta = \emptyset$, or $\Delta = 
\langle \{x_1,\ldots,x_n\} \rangle,$ i.e., a simplex; or
\item[$(ii)$] there exists a vertex $x \in V$ such that
the {\it link}
of $x$, i.e.,
\[\lk_{\Delta}(x) = \{ H \in \Delta ~|~ H \cap \{x\} = \emptyset ~\mbox{and}~~
H \cup \{x\} \in \Delta\},\]
and the {\it deletion} of $x$, i.e.,
$\del_{\Delta}(x) = \{H \in \Delta ~|~ H \cap \{x\} = \emptyset \}$,
are both vertex decomposable simplicial complexes.
\end{enumerate}

The second family is the family of shellable
simplicial complexes.  A pure complex $\Delta$ is
{\it shellable} if 
the facets of $\Delta$ can be ordered, say  $F_1,\ldots,F_s$, 
such that for all $1\leq i<j\leq s$, there 
exists some $x\in F_j\setminus F_i$ and some 
$\ell\in \{1,\ldots,j-1\}$ with $F_j\setminus F_\ell= \{x\}$. 

Finally, a pure simplicial complex $\Delta$ is {\it
Cohen-Macaulay\footnote[1]{One normally defines a simplicial complex
$\Delta$
to be Cohen-Macaulay either in terms of the depth and dimension
of $R/I_{\Delta}$, or in terms of the reduced simplicial homology
of $\Delta$. Our  definition uses the 
characterization of Cohen-Macaulay simplicial
complexes due to Eagon and Reiner \cite{ER}.} over $k$} 
if the minimal free resolution of $I_{\Delta^\vee}$
over $R =k[x_1,\ldots,x_n]$
is linear.  Recall that an ideal $I \subseteq R = k[x_1,\ldots,x_n]$
has a {\it linear minimal free resolution} if $I$ has a minimal free
resolution of the form
\[0 \rightarrow 
R^{b_t}(-d-t)                        
\longrightarrow
\cdots
\longrightarrow
R^{b_2}(-d-2)                        
\longrightarrow
R^{b_1}(-d-1)                        
\longrightarrow
R^{b_0}(-d)                        
\longrightarrow
I
\longrightarrow
0
\]
for some integer $d$
where $R(-d-i)$ denotes the polynomial ring shifted by degree $d+i$
and $R^{b_i}(-d-i) = R(-d-i) \oplus \cdots \oplus R(-d-i)$ ($b_i$ times).

We now state some of the basic results that we require, with references
to their proofs.

\begin{theorem}\label{vdprop}
Let $\Delta$ be a pure simplicial complex.
\begin{enumerate}
\item[$(i)$] If $\Delta$ is vertex decomposable, then $\Delta$ is shellable.
\item[$(ii)$] If $\Delta$ is shellable, then $\Delta$ is Cohen-Macaulay.
\item[$(iii)$] If $\dim \Delta =1$ and $\Delta$ is connected,
then $\Delta$ is vertex decomposable.
\end{enumerate}
\end{theorem}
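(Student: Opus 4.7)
The plan is to handle the three parts separately. Parts (i) and (iii) admit clean inductive proofs, whereas (ii) is most efficiently obtained by invoking two classical results from Stanley--Reisner theory.

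For (i), I would induct on $|V|$, with the base case of a simplex immediate. For the inductive step, choose a shedding vertex $x$; by induction both $\lk_\Delta(x)$ and $\del_\Delta(x)$ are shellable, with shellings, say, $G_1,\ldots,G_b$ and $F_1,\ldots,F_a$ respectively. I would then verify that
\[F_1,\ldots,F_a,\ G_1\cup\{x\},\ldots,G_b\cup\{x\}\]
is a shelling of $\Delta$. The only subtlety is comparing an $F_i$ with a later facet $G_j\cup\{x\}$: the shelling of $\lk_\Delta(x)$ supplies an earlier facet $G_\ell\cup\{x\}$ that differs from $G_j\cup\{x\}$ in a single vertex, fulfilling the shelling condition.

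For (ii), I would combine Reisner's criterion with the Eagon--Reiner theorem. Reisner characterizes the classical (depth-theoretic) Cohen--Macaulay property via the vanishing of reduced simplicial homology of every vertex link below top dimension. A straightforward induction on the shelling order shows that any shellable complex is homotopy equivalent to a wedge of top-dimensional spheres, and the same argument applied to links (which are themselves shellable) gives the required homology vanishing in all links. The Eagon--Reiner theorem then rephrases classical Cohen--Macaulayness as the linear-resolution statement about $I_{\Delta^\vee}$ that serves as the definition adopted in this paper.

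For (iii), I would again induct on $|V|$. If $\Delta$ is a single edge it is a simplex. Otherwise $\Delta$ is a connected graph on at least three vertices, and one can choose a non-cut vertex $x$ (for instance an endpoint of a longest path). A leaf adjacent to $x$ would force $x$ to be a cut vertex, so $\del_\Delta(x)$ has no isolated vertices; being also connected with at least two vertices, it is a pure $1$-dimensional connected complex, hence vertex decomposable by induction. The link $\lk_\Delta(x)$ is a nonempty pure $0$-dimensional complex, which is vertex decomposable by shedding vertices one at a time. I expect (ii) to be the main obstacle, since it relies on the most machinery; parts (i) and (iii) are routine inductions.
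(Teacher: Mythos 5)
The paper proves this theorem purely by citation ((i) and (iii) to Provan--Billera, (ii) to Villarreal), so any argument you supply is by nature ``different''; your sketches of (ii) (Reisner's criterion plus Eagon--Reiner, with links of shellable complexes again shellable and homotopy wedges of top spheres) and of (iii) (delete an endpoint of a longest path, which is not a cut vertex) are the standard routes and are fine in outline. The genuine problem is in (i), at exactly the step you call the only subtlety. The shelling condition used in this paper demands, for an earlier facet $F_i$ and a later facet $F'_j=G_j\cup\{x\}$, a vertex $v\in F'_j\setminus F_i$ \emph{and} an earlier facet whose difference from $F'_j$ is precisely $\{v\}$. Your proposed witness, an earlier link facet $G_\ell\cup\{x\}$ differing from $G_j\cup\{x\}$ in one vertex $v\in G_j\setminus G_\ell$, fails on both counts: that $v$ need not lie outside $F_i$, and when $G_j=G_1$ is the first facet of the link block there is no earlier link facet at all. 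Concretely, take $\Delta=\langle\{x_2,x_3\},\{x_1,x_2\},\{x_1,x_3\}\rangle$ (the boundary of a triangle) with shedding vertex $x_1$, so the proposed order is $\{x_2,x_3\},\{x_1,x_2\},\{x_1,x_3\}$. Comparing $F_1=\{x_2,x_3\}$ with $\{x_1,x_3\}$, the link shelling offers $\{x_1,x_2\}$ with difference $\{x_3\}$, but $x_3\in F_1$, so this pair does not verify the condition.

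The correct witness in the mixed case is $v=x$ itself: since $x\notin F_i$, certainly $x\in F'_j\setminus F_i$, and one needs an earlier facet $F_\ell$ of $\del_\Delta(x)$ with $G_j\subseteq F_\ell$, which gives $(G_j\cup\{x\})\setminus F_\ell=\{x\}$. Such an $F_\ell$ exists because $G_j$ is a face of $\del_\Delta(x)$ of dimension $\dim\Delta-1$ and $\del_\Delta(x)$ is pure of dimension $\dim\Delta$ --- a requirement that is part of the Provan--Billera notion of a shedding vertex (and is implicit in the paper's insistence that the deletion be a vertex decomposable \emph{pure} complex), but which your induction never invokes. With this repair (and the observation that comparisons within each block are handled by the two inductive shellings), your inductive proof of (i) goes through.
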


\begin{proof} $(i)$ is \cite[Corollary 2.9]{PB}; $(ii)$ is 
\cite[Theorem 5.3.18]{V}; and $(iii)$ is \cite[Theorem 3.1.2]{PB}.
\end{proof}

\begin{example}\label{specialcases}
We show that
both ${\tt vdW}(5,2)$ and ${\tt vdW}(6,2)$ are vertex decomposable.
Not only do these examples illuminate our definitions,
we require these special arguments for these complexes to prove Theorem 
\ref{maintheorem}.
We begin with 
\[\Delta = {\tt vdW}(5,2) = 
\langle \{x_1,x_2,x_3\},\{x_2,x_3,x_4\},\{x_3,x_4,x_5\},\{x_1,x_3,x_5\} 
\rangle. \]
We form the deletion and link of $x_5$:
\begin{eqnarray*}
\del_\Delta(x_5) &=&
 \langle \{x_1,x_2,x_3\},\{x_2,x_3,x_4\} \rangle ~~\mbox{and}~~ \lk_\Delta(x_5) = \langle \{x_3,x_4\},\{x_1,x_3\}\rangle.
\end{eqnarray*}
Now  $\lk_\Delta(x_5)$ is vertex decomposable by Theorem \ref{vdprop} $(iii)$.
Let $\Gamma = \del_\Delta(x_5)$ and form the link and deletion
with respect to $x_4$:
\[\del_\Gamma(x_4) =
 \langle \{x_1,x_2,x_3\} \rangle ~~\mbox{and}~~ \lk_\Gamma(x_5) = 
\langle \{x_2,x_3 \} \rangle.\]
Both of these complexes are simplicies, so $\del_\Delta(x_5)$ is 
vertex decomposable, and consequently, so is ${\tt vdW}(5,2)$

The proof for the complex
\[\Delta = {\tt vdW}(6,2) = \langle \{x_1,x_2,x_3\},\{x_2,x_3,x_4\},\{x_3,x_4,x_5\},
\{x_4,x_5,x_6\},\{x_1,x_3,x_5\},\{x_2,x_4,x_6\} \rangle\]
is similar.  We form the deletion and link of $x_6$.  In particular,
\begin{eqnarray*}
\del_\Delta(x_6) &=&
 \langle \{x_1,x_2,x_3\},\{x_2,x_3,x_4\},\{x_3,x_4,x_5\},\{x_1,x_3,x_5\}  \rangle
= {\tt vdW}(5,2), ~~\mbox{and}~~ \\
\lk_\Delta(x_6)& =& \langle \{x_4,x_5\},\{x_2,x_4\}\rangle.
\end{eqnarray*}
We just showed that ${\tt vdW}(5,2) = \del_\Delta(x_6)$ 
is vertex decomposable, and  $\lk_\Delta(x_6)$ is vertex decomposable
by Theorem \ref{vdprop} $(iii)$.  So, ${\tt vdW}(6,2)$ is vertex decomposable.
\end{example}

We complete this section with some results about the first syzygy module 
of a monomial ideal.  Let $I$ be a monomial ideal of $R = k[x_1,\ldots,x_n]$
whose unique set of minimal generators are $G(I) = \{m_1,\ldots,m_s\}$.
Let $d_i = \deg(m_i)$ for $i=1,\ldots,s$, and let $e_{m_i}$ denote
the basis element of the shifted $R$-module $R(-d_i)$.
We can then construct the
following degree zero $R$-module homomorphism
\[\varphi: M = R(-d_1) \oplus R(-d_2) \oplus \cdots \oplus R(-d_s) \longrightarrow
I\]
where $e_{m_i} \mapsto m_i$   for $i=1,\ldots,s$.
The {\it first syzygy module of $I$} is then
\[{\rm Syz}^1_R(I) = \{(F_1,\ldots,F_s) \in  M ~|~ \varphi(F_1,\ldots,F_s)
= F_1m_1+\cdots + F_sm_s = 0\},\]
i.e., ${\rm Syz}^1_R(I) = {\rm ker}(\varphi)$.   
The module ${\rm Syz}^1_R(I)$ is a finitely generated $R$-module; in fact:

\begin{theorem}[{\cite[Corollary 4.13]{EH}}]\label{generatorsSYZ}
Let $I \subseteq R = k[x_1,\ldots,x_n]$ be
a monomial ideal with minimal generators $G(I) = \{m_1,\ldots,m_s\}$.
Then
\[{\rm Syz}^1_R(I) = \langle \sigma_{j,i}e_{m_i} - \sigma_{i,j}e_{m_j} ~|~
1 \leq i < j \leq s \rangle
~~\mbox{where}~~\sigma_{i,j} =  \frac{m_i}{{\rm gcd}(m_i,m_j)}.\] 
\end{theorem}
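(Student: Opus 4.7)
My plan is to exploit the fact that monomial ideals are $\mathbb{Z}^n$-graded and reduce the problem to understanding syzygies in a single multidegree. First, I would observe that each claimed generator is actually a syzygy: since $\sigma_{j,i}m_i = \sigma_{i,j}m_j = \operatorname{lcm}(m_i,m_j)$, applying $\varphi$ to $\sigma_{j,i}e_{m_i} - \sigma_{i,j}e_{m_j}$ gives $0$. So the submodule $N \subseteq \operatorname{Syz}^1_R(I)$ generated by these elements is well-defined; the task is to prove $N = \operatorname{Syz}^1_R(I)$.

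Next, I would use the $\mathbb{Z}^n$-grading. Both $M = \bigoplus R(-d_i)$ and $I$ are naturally $\mathbb{Z}^n$-graded, with $e_{m_i}$ in multidegree equal to the exponent vector of $m_i$, and $\varphi$ is multigraded. Hence $\operatorname{Syz}^1_R(I)$ is multigraded, and it suffices to show that every multihomogeneous syzygy lies in $N$. A multihomogeneous element of $M$ in multidegree $\mathbf{a}$ has the form $\sum_{i \in S(\mathbf{a})} c_i u_i e_{m_i}$, where $c_i \in k$, $S(\mathbf{a}) = \{i : m_i \mid x^{\mathbf{a}}\}$, and $u_i = x^{\mathbf{a}}/m_i$ is a monomial. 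This element maps under $\varphi$ to $\bigl(\sum_i c_i\bigr) x^{\mathbf{a}}$, so it lies in $\operatorname{Syz}^1_R(I)$ precisely when $\sum_{i \in S(\mathbf{a})} c_i = 0$.

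The main step is then a telescoping argument. Fix $\mathbf{a}$, pick any $i_0 \in S(\mathbf{a})$, and use the condition $\sum c_i = 0$ to rewrite
\[
\sum_{i \in S(\mathbf{a})} c_i u_i e_{m_i} = \sum_{i \in S(\mathbf{a}) \setminus \{i_0\}} c_i \bigl(u_i e_{m_i} - u_{i_0} e_{m_{i_0}}\bigr).
\]
For each $i \neq i_0$ in $S(\mathbf{a})$, both $m_i$ and $m_{i_0}$ divide $x^{\mathbf{a}}$, so $\operatorname{lcm}(m_i,m_{i_0}) \mid x^{\mathbf{a}}$; setting $v = x^{\mathbf{a}}/\operatorname{lcm}(m_i,m_{i_0})$ one checks $u_i = v\sigma_{i_0,i}$ and $u_{i_0} = v\sigma_{i,i_0}$, hence
\[
u_i e_{m_i} - u_{i_0} e_{m_{i_0}} = v\bigl(\sigma_{i_0,i} e_{m_i} - \sigma_{i,i_0} e_{m_{i_0}}\bigr) \in N.
\]
Summing, the original multihomogeneous syzygy belongs to $N$, completing the proof.

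The only subtle point is the reduction to a single multidegree, so I expect that step to be where the most care is required; once inside a fixed multidegree the argument is a clean telescoping, and the identities $u_i = v\sigma_{i_0,i}$ and $u_{i_0} = v\sigma_{i,i_0}$ are just bookkeeping of exponent vectors.
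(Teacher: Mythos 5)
Your argument is correct. Note that the paper itself does not prove this statement at all: it is quoted as Corollary 4.13 of Ene--Herzog, where it is obtained from Gr\"obner basis theory for modules (a Schreyer-type computation of syzygies from S-pairs, specialized to monomial ideals). Your proof is instead the standard direct argument: check that $\sigma_{j,i}e_{m_i}-\sigma_{i,j}e_{m_j}$ maps to $\operatorname{lcm}(m_i,m_j)-\operatorname{lcm}(m_i,m_j)=0$; observe that $\varphi$ is $\mathbb{Z}^n$-graded, so its kernel is generated by multihomogeneous elements; identify the degree-$\mathbf{a}$ syzygies with vectors $(c_i)_{i\in S(\mathbf{a})}$ summing to zero; and telescope against a fixed $i_0$, using the identities $u_i=v\sigma_{i_0,i}$, $u_{i_0}=v\sigma_{i,i_0}$ with $v=x^{\mathbf{a}}/\operatorname{lcm}(m_i,m_{i_0})$, which are exactly right. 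This is in essence the proof that the Taylor complex is exact at the first step, and it buys self-containedness and elementarity: no Gr\"obner machinery is needed, only the observation that a multigraded kernel is spanned by its multihomogeneous pieces. The one point worth stating explicitly (you use it implicitly) is that an arbitrary element of $\ker\varphi$ decomposes into multihomogeneous components each of which again lies in $\ker\varphi$, because $\varphi$ preserves multidegree; with that sentence added, your write-up is complete and fully replaces the citation.
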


The set of generators in the above result may not be a 
minimal set of generators.   However, some subset of these
generators is a minimal set of generators.   
The first syzygy module is {\it generated by linear first syzygies} 
if there is some subset $T \subseteq 
\{ \sigma_{j,i}e_{m_i} - \sigma_{i,j}e_{m_j} ~|~
1 \leq i < j \leq s \}$ that generates ${\rm Syz}_R^1(I)$, 
and for all $\sigma_{j,i}e_{m_i} - \sigma_{i,j}e_{m_j} \in T$,
$\deg \sigma_{i,j} = \deg \sigma_{j,i} = 1$.    

The construction of ${\rm Syz}_R^1(I)$ is the first step
in the construction of the minimal free resolution of $I$.  In
particular, we have the following fact.

\begin{theorem}\label{firstSYZ}  
If $I$ is a monomial ideal with a linear resolution,
then ${\rm Syz}_R^1(I)$ is generated by linear first syzygies.
\end{theorem}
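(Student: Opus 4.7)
The plan is to combine the explicit generating set of the first syzygy module given by Theorem \ref{generatorsSYZ} with a degree/Nakayama argument driven by the hypothesis that $I$ has a linear resolution. The key observation is that a linear resolution pins down the degrees in which ${\rm Syz}^1_R(I)$ can have minimal generators, and this degree constraint forces the syzygies to be linear ones in the sense defined above.

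First I would unpack the consequences of the linearity hypothesis. Since $I$ has a linear resolution of the form displayed before Theorem \ref{vdprop}, every minimal generator $m_i$ of $I$ must have the same degree $d$; in particular, the map $\varphi$ preceding Theorem \ref{generatorsSYZ} is $M = R(-d)^{\oplus s} \to I$. Chopping off $I$ from the resolution identifies ${\rm Syz}^1_R(I) = \ker \varphi$ with the image of $R^{b_1}(-d-1) \to R^{b_0}(-d)$, so ${\rm Syz}^1_R(I)$ is a finitely generated graded submodule of $M$ whose minimal generators all sit in degree $d+1$.

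Next I would bring in the generating set from Theorem \ref{generatorsSYZ}. Call it
\[ S = \{\sigma_{j,i}e_{m_i} - \sigma_{i,j}e_{m_j} \;:\; 1\le i<j\le s\}. \]
Each element of $S$ is homogeneous: the two summands lie in degrees $\deg(\sigma_{j,i})+d$ and $\deg(\sigma_{i,j})+d$ respectively, and these agree because $\sigma_{j,i}m_i = \sigma_{i,j}m_j = \operatorname{lcm}(m_i,m_j)$, forcing $\deg \sigma_{j,i} = \deg \sigma_{i,j}$. Thus each element of $S$ has well-defined degree $d + \deg \sigma_{j,i}$.

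Finally I would invoke the graded Nakayama lemma: since $S$ is a homogeneous generating set of the finitely generated graded $R$-module ${\rm Syz}^1_R(I)$, one can extract from $S$ a subset $T$ that is a minimal generating set (reduce modulo the irrelevant maximal ideal and pick a basis from the images of $S$). Because every minimal generator of ${\rm Syz}^1_R(I)$ lies in degree $d+1$, every element of $T$ lies in degree $d+1$, which by the degree formula above forces $\deg \sigma_{j,i} = \deg \sigma_{i,j} = 1$ for each syzygy in $T$. This is exactly the condition that $T$ be a set of linear first syzygies, completing the argument. The only subtlety I anticipate is being careful with the degree shifts when identifying ${\rm Syz}^1_R(I)$ inside $M$, so that the claim ``minimal generators sit in degree $d+1$'' matches the degree computation for elements of $S$; beyond that the proof is mostly bookkeeping.
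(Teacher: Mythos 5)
Your proof is correct. Note that the paper offers no argument for Theorem \ref{firstSYZ} at all---it is stated as a known fact---so there is no proof of record to compare against; what you give is precisely the standard justification, and it meshes with the paper's definition of ``generated by linear first syzygies.'' The bookkeeping checks out: linearity of the minimal free resolution forces every element of $G(I)$ to have the common degree $d$, and it forces $\ker\varphi$ to be generated in degree $d+1$ (here you use, implicitly, that $\varphi$ is itself a minimal free cover of $I$ since it is built from the minimal generating set, so $\ker\varphi$ agrees as a graded module with the image of $R^{b_1}(-d-1)\to R^{b_0}(-d)$); each generator $\sigma_{j,i}e_{m_i}-\sigma_{i,j}e_{m_j}$ from Theorem \ref{generatorsSYZ} is homogeneous of degree $\deg\big(\operatorname{lcm}(m_i,m_j)\big)$ because $\sigma_{j,i}m_i=\sigma_{i,j}m_j$ and all $m_i$ have degree $d$; and since ${\rm Syz}^1_R(I)/\mathfrak{m}\,{\rm Syz}^1_R(I)$ is concentrated in degree $d+1$, the graded Nakayama extraction can only select elements of the generating set lying in degree $d+1$, i.e., those with $\deg\sigma_{i,j}=\deg\sigma_{j,i}=1$. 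The resulting subset $T$ is exactly of the form required by the paper's definition, so the argument is complete (with the trivial remark that if $\ker\varphi=0$ one may take $T=\emptyset$).
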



\section{Proof of the main theorem}

We prove Theorem \ref{maintheorem} in this section.  To do
so,  we require the following two lemmas about the 
facets of ${\tt vdW}(n,k)$.   
Given  a facet $F=\{x_i,x_{i+d},x_{i+2d},\ldots,x_{i+kd}\} \in {\tt vdW}(n,k)$, 
we call $d$ the {\it increment of $F$}.  Note that every facet has an
associated increment.

\begin{lemma}\label{mainlemma1}
Suppose $n \geq 7$.   Let $F \in {\tt vdW}(n,2)$ be any facet
such that its increment is the largest possible odd integer $d$.
If $G \in {\tt vdW}(n,2)$ is any other facet with increment
$d' \neq d$, then $|F \cap G| \leq 1$.
\end{lemma}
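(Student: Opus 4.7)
The plan is to reduce the assertion, by a short case analysis on the possible common differences, to a single numerical inequality that follows from the maximality of $d$.

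Write $F = \{x_a, x_{a+d}, x_{a+2d}\}$ and note that the three pairs of elements of $F$ have differences $d$, $d$, and $2d$. If $|F \cap G| \geq 2$, then $G$ must contain some pair of elements of $F$, whose difference is therefore $d$ or $2d$. Since $G$ is a $3$-term arithmetic progression with increment $d'$, any two of its elements differ by $d'$ or $2d'$. Matching these yields $d \in \{d', 2d'\}$ or $2d \in \{d', 2d'\}$. The subcase $d = d'$ is excluded by hypothesis, $2d = 2d'$ reduces to the same, and $d = 2d'$ is impossible because $d$ is odd. Only the possibility $d' = 2d$ remains.

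To rule out $d' = 2d$, I would show that when $d$ is the largest odd increment in ${\tt vdW}(n,2)$ and $n \geq 7$, no facet has increment $2d$; equivalently, $4d > n - 1$. By maximality, the next odd integer $d+2$ is not a valid increment of any facet, so $2(d+2) > n - 1$, giving $d > (n-5)/2$ and hence $4d > 2n - 10$. For $n \geq 9$ this already forces $4d > n-1$. The two remaining cases $n = 7$ and $n = 8$ both have $d = 3$, so $4d = 12 > n - 1$ holds directly, completing the argument.

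The main obstacle is modest: the parity hypothesis on $d$ is essential both for excluding $d = 2d'$ and for forcing the next valid candidate odd increment to be $d+2$ rather than $d+1$; without it the inequality in the last step would fail. The small boundary values $n = 7, 8$ also need separate treatment because the general bound $4d > 2n - 10$ just barely fails to exceed $n - 1$ in that range.
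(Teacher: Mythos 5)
Your proof is correct and follows essentially the same route as the paper: both arguments reduce $|F\cap G|\geq 2$ to the alternatives $d=d'$, $d=2d'$, or $d'=2d$ (the paper via nine positional cases, you via the multiset of pairwise differences), kill the first two by hypothesis and parity, and exclude $d'=2d$ using the maximality of the odd increment $d$. Your explicit inequality $4d>n-1$ (with the $n=7,8$ check) is just a slightly more quantitative packaging of the paper's observation that the largest increment is $d$ or $d+1$, so $2d$ is not a valid increment.
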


\begin{proof}
Because $n \geq 7$, the complex ${\tt vdW}(n,2)$ contains the facet $\{1,4,7\}$.
Thus the largest odd increment $d$ satisfies $d \geq 3$.  
Let $F = \{x_a,x_{a+d},x_{a+2d}\}$ be any facet whose increment is $d$
and let $G = \{x_b,x_{b+d'},x_{b+2d'}\}$ be any other facet whose
increment is $d' \neq d$.    

It is immediate that $F \neq G$, so $|F \cap G| \leq 2$.   So
suppose $|F \cap G| = 2$.  Since $a < a+d < a+2d$ and $b < b+d'
< b+2d'$, we have the following possible cases:
\begin{center}
\begin{tabular}{llll}
$(a)$ & $a=b$ and $a+d = b+d'$  & $(f)$ & $a=b+d'$ and $a+2d = b+2d'$\\
$(b)$ & $a=b$ and $a+d = b+2d'$ & $(g)$ & $a+d=b$ and $a+2d = b+d'$ \\
$(c)$ & $a=b$ and $a+2d = b+d'$ & $(h)$ & $a+d=b$ and $a+2d = b+2d'$ \\
$(d)$ & $a=b$ and $a+2d = b+2d'$ & $(i)$ & $a+d=b+d'$ and $a+2d=b+2d'$.\\
$(e)$ & $a=b+d'$ and $a+d = b+2d'$ && \\
\end{tabular}
\end{center}
Cases $(a), (d), (e), (g)$ and $(i)$ all imply $d=d'$, so we can eliminate
those cases.  For cases $(b)$ and $(h)$,  we would have $d=2d'$, which implies
that the odd integer $d$ is even, so this case cannot happen.  
Finally, for cases $(c)$ and $(f)$, we would have 
$2d = d'$.  But $d\geq 3$ is the largest
odd increment, so the largest increment of ${\tt vdW}(n,2)$ is either
$d$ or $d+1$.  But $d' = 2d > d+1$, so this is not a valid
increment, and consequently, this case cannot happen. 

Therefore, it must be the case that $|F \cap G| \leq 1$.
\end{proof}

We now prove a similar lemma, but now we do not require the increment
to be odd.

\begin{lemma}\label{mainlemma2}
Suppose $n \geq 7$ and $2 < k <\frac{n}{2}$.  
Let $F \in {\tt vdW}(n,k)$ be any facet whose increment
$d$ is the largest possible.
If $G \in {\tt vdW}(n,k)$ is any other facet with increment
$d' \neq d$, then $|F \cap G| \leq k-1$.
\end{lemma}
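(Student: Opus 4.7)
The plan is to argue by contradiction: assume $|F\cap G|=k$ (it cannot exceed $k$ since $F\neq G$) and deduce $d=d'$. Since $|F|=|G|=k+1$, the intersection $S:=F\cap G$ is obtained from $F$ by deleting a single term and from $G$ by deleting a single term. Writing $F=\{x_a,x_{a+d},\dots,x_{a+kd}\}$ and $G=\{x_b,x_{b+d'},\dots,x_{b+kd'}\}$, the term deleted from $F$ is either an endpoint of the arithmetic progression ($x_a$ or $x_{a+kd}$) or an interior term $x_{a+id}$ with $0<i<k$; similarly for $G$. This gives four cases to handle.

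The central tool would be the multiset $M$ of consecutive differences of the elements of $S$ when listed in increasing order: since $S$ is the same set viewed from either side, $M$ admits two descriptions that must coincide. If the deleted term of $F$ is an endpoint, then $S$ is itself an arithmetic progression with common difference $d$ of length $k$, so $M$ consists of $k-1$ copies of $d$; if the deleted term is interior, then sorting $S$ yields $k-2$ copies of $d$ together with a single $2d$ (the gap spanning the deleted term). The analogous description holds from $G$'s side with $d$ replaced by $d'$. Running through the four combinations (endpoint/endpoint, endpoint/interior, interior/endpoint, interior/interior) and equating the two descriptions of $M$, each case would be forced to yield $d=d'$, contradicting the hypothesis $d\neq d'$. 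In the endpoint/endpoint case this is immediate; in the interior/interior case matching the sorted sequences $(d,\dots,d,2d)$ and $(d',\dots,d',2d')$ entry by entry forces $d=d'$.

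The condition $k>2$ enters crucially in the two mixed endpoint/interior cases: one multiset is $\{d,\dots,d\}$ with $k-1$ entries, while the other is $\{d',\dots,d',2d'\}$ with $k-2$ copies of $d'$ and one $2d'$. For $k\geq 3$ the sorted second multiset begins with $d'$ and ends with $2d'$, and matching it term by term against $(d,d,\dots,d)$ forces both $d=d'$ and $d=2d'$, giving $d'=0$ and the desired contradiction; when $k=2$ the argument collapses, consistent with the fact that $k=2$ is handled separately in Lemma~\ref{mainlemma1}. The main obstacle I anticipate is executing this mixed-case multiset comparison cleanly, especially in the smallest case $k=3$ where $M$ has only two entries; once phrased as a sorted comparison it reduces to a short check. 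The hypothesis that $d$ is the largest increment does not appear to be needed for the internal contradiction itself and will instead play its role when this lemma is applied to control the Stanley--Reisner ideal of the Alexander dual.
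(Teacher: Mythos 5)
Your argument is correct, but it takes a genuinely different route from the paper. You never use the hypothesis that $d$ is the largest increment: writing $S=F\cap G$ with $|S|=k$, you compare the two descriptions of the sequence of consecutive gaps of $S$ (either $k-1$ copies of the common difference, or $k-2$ copies plus one doubled gap, depending on whether the missing element is an endpoint or interior), and in each of the four cases the comparison forces $d=d'$ or $d'=0$, both impossible. This proves a strictly stronger statement: for $k\geq 3$, \emph{any} two facets of ${\tt vdW}(n,k)$ with distinct increments meet in at most $k-1$ vertices, with $k>2$ entering exactly where you say (the mixed endpoint/interior case), and failing for $k=2$ as you note. The paper instead argues via maximality: since $|G\setminus F|\leq 1$ and $|G|=k+1\geq 4$, two \emph{consecutive} terms $x_{b+id'},x_{b+(i+1)d'}$ of $G$ lie in $F$, say equal to $x_{a+\ell d}$ and $x_{a+jd}$ with $\ell<j$, whence $d'=(j-\ell)d$ is a positive multiple of $d$; as $d'\neq d$ this gives $d'\geq 2d>d$, contradicting the maximality of $d$. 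The paper's proof is shorter and runs parallel to the $k=2$ argument of Lemma~\ref{mainlemma1} (which genuinely needs a maximality-type hypothesis, there on the largest odd increment), while yours buys a cleaner, hypothesis-free combinatorial fact about arithmetic progressions of length at least $4$; either version suffices for the application in the proof of Theorem~\ref{maintheorem}, where $F$ is in any case chosen with maximal increment.
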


\begin{proof}
Since $k < \frac{n}{2}$, we have $\{x_1,x_3,\ldots,x_{1+2k}\}
\in {\tt vdW}(n,k)$.   If $F \in {\tt vdW}(n,k)$ has the largest possible
increment $d$, we must therefore have $d \geq 2$.

Let $F= \{x_a,x_{a+d},\ldots,x_{a+kd}\}$ be a facet with
increment $d$, and suppose that the facet  $G = \{x_b,x_{b+d'},\ldots,x_{b+kd'}\}$
has increment $d' \neq d$.  Since the
facets are distinct, we must have $|F \cap G| \leq k$.   

Suppose that $|F \cap G| = k$.  Since $|G| = k \geq 3$, there
must be $x_{b+id'},x_{b+(i+1)d'} \in G$, i.e., two consecutive 
terms of the arithmetic progression in $G$ such that 
\[a+\ell d = b+id' ~~\mbox{and} ~~~ a+jd = b+(i+1)d' 
~~\mbox{for some $\ell < j$.}\]  
But these two equations imply that $(j-\ell)d = d'$, i.e., $d' \geq d$,
contradicting the fact that $d$ is the largest increment.  So
$|F \cap G| \leq k-1$.
\end{proof}

We now prove Theorem \ref{maintheorem}.

\begin{proof}{(of Theorem \ref{maintheorem})}
We break the proof into cases depending on $0 < k < n$.

{\bf Case 1:} $k=1$ and $1 <n$. In this case  ${\tt vdW}(n,1)$ is vertex decomposable
by Theorem \ref{vdprop} $(iii)$ because 
\[{\tt vdW}(n,1) = \langle \{x_i,x_j\} ~|~ 1 \leq i < j \leq n \rangle,\]
is a connected one-dimensional simplicial complex.

{\bf Case 2:} $\frac{n}{2} \leq k < n$.
If $1 = k < 2$, then
${\tt vdW}(2,1)$ is vertex decomposable by the previous case.
We now proceed by induction on $n$.  If $k = n-1$, then
${\tt vdW}(n,n-1) = \langle \{x_1,x_2,x_3,\ldots,x_n\} \rangle$ is a
simplex, and hence, vertex decomposable.  

So suppose that  $\frac{n}{2} \leq k < n-1$.
Every facet of  ${\tt vdW}(n,k)$ must have increment $d=1$
since $\frac{n}{2} \leq k$.  So
\[\Delta = {\tt vdW}(n,k) = \langle \{x_1,x_2,\ldots,x_{k+1}\},
\{x_2,x_3,\ldots,x_{k+2}\},
\ldots,\{x_{n-k},\ldots,x_n\} \rangle.\]
We form the link and deletion of $x_n$:
\begin{eqnarray*}
\del_\Delta(x_n) & = & {\tt vdW}(n-1,k) ~~\mbox{and}~~
\lk_\Delta(x_n) =  \langle \{x_{n-k},\ldots,x_{n-1}\}\rangle.
\end{eqnarray*}
Since $\frac{n-1}{2} < k < n-1$, by induction ${\tt vdW}(n-1,k)$ 
is vertex decomposable.  Because $\lk_\Delta(x_n)$ is a simplex,
we can now conclude that ${\tt vdW}(n,k)$ is vertex decomposable
if $\frac{n}{2} \leq k < n$.

{\bf Case 3:} $0 < k < n \leq 6$.
The only $n$ and $k$ in this case not covered by Case 1 or 2 is
$(n,k) = (5,2)$ or $(6,2)$.  We now use Example \ref{specialcases}
to complete this case.

{\bf Case 4:} $n > 6$ and $2 \leq k < \frac{n}{2}$. 
Let $I = I_{{\tt vdW}(n,k)^\vee}$ be the Stanley-Reisner ideal of the 
Alexander dual of ${\tt vdW}(n,k)$.  We will show that 
${\rm Syz}_R^1(I)$ cannot be generated by linear first syzygies.
It will then follow by Theorem \ref{firstSYZ} that $I$ does
not have a linear minimal free resolution, and consequently,
${\tt vdW}(n,k)$ is a  simplicial complex that is pure but
not Cohen-Macaulay.

If ${\tt vdW}(n,k) = \langle F_1,\ldots,F_s \rangle$, then
by Lemma \ref{Idv}, 
\[I = \left.\left\langle m_{F_i^c} = \prod_{x \not\in F_i} x ~\right|~ i=1,\ldots,s 
\right\rangle.\]
Since the complex is pure, this ideal is generated by $s$
monomials all of degree $n - k -1$.

We first consider the case that $3 \leq k < \frac{n}{2}$.   Let
$F$ be any facet with the largest increment $d$.  Since $n >6$, we know that
$d \geq 3$.  Now take another facet $G$ with increment $d' \neq d$.
We know that 
\[
\frac{m_{G^c}}{{\rm gcd}(m_{F^c},m_{G^c})}e_{m_{F^c}} - 
\frac{m_{G^c}}{{\rm gcd}(m_{F^c},m_{G^c})}e_{m_{G^c}}
\]
is a (possibly non-mimimal) 
generator of ${\rm Syz}_R^1(I)$ by Theorem \ref{generatorsSYZ}.
Moreover, this generator is not a linear first syzygy because
Lemma \ref{mainlemma2} tells us that $|F \cap G| \leq k-1$,
which implies that 
\[\deg \left(\frac{m_{G^c}}{{\rm gcd}(m_{F^c},m_{G^c})}\right) \geq 2 ~~\mbox{and}
\deg \left(\frac{m_{F^c}}{{\rm gcd}(m_{F^c},m_{G^c})} \right)  \geq 2.
\]
To see why, $m_{F^c}$ and $m_{G^c}$ are squarefree monomials, so
\begin{eqnarray*}
\deg ({\rm gcd}(m_{F^c},m_{G^c})) &=& |F^c \cap G^c| = |(F \cup G)^c| 
=  n - |F \cup G| \\
&=& n - |F| - |G| + |F \cap G| \\
&\leq&
 n - (k+1) - (k+1) + (k-1) = n - k -3.
\end{eqnarray*}
Since $\deg (m_{F^c}) = \deg (m_{G^c}) = n - k -1$, the result follows.

Now suppose that ${\rm Syz}_R^1(I)$ is generated by linear first syzygies.
So, in particular there are facets $H_1,\ldots,H_t \in \{F_1,\ldots,F_s\}$,
not necessarily distinct,
so that we can write
\begin{equation}\label{equality}
\frac{m_{G^c}}{{\rm gcd}(m_{F^c},m_{G^c})}e_{m_{F^c}} - 
\frac{m_{F^c}}{{\rm gcd}(m_{F^c},m_{G^c})}e_{m_{G^c}} 
\end{equation}
\[
= \sum_{i=1}^t A_i\left(\frac{m_{H_i^c}}{{\rm gcd}(m_{H_i^c},m_{H_{i+1}^c})}
e_{m_{H_{i+1}^c}} - 
\frac{m_{H_{i+1}^c}}{{\rm gcd}(m_{H_i^c},m_{H_{i+1}^c})}e_{m_{H_i^c}}\right),\]
where each 
$\frac{m_{H_i^c}}{{\rm gcd}(m_{H_i^c},m_{H_{i+1}^c})}
e_{m_{H_{i+1}^c}} - 
\frac{m_{H_{i+1}^c}}{{\rm gcd}(m_{H_i^c},m_{H_{i+1}^c})}e_{m_{H_i^c}}$ is 
a linear first syzygy.  

Note that if the facet $H$ has increment $d$,
the largest possible increment,
and 
\[\frac{m_{H^c}}{{\rm gcd}(m_{H^c},m_{K^c})}
e_{m_{K^c}} - 
\frac{m_{K^c}}{{\rm gcd}(m_{H^c},m_{K^c})}e_{m_{H^c}}\]
is any linear first syzygy involving $H$, then $K$
must also have increment $d$.  Indeed, if the
increment of $K$ is $d'\neq d$, then we could again use
Lemma \ref{mainlemma2} to show that 
\[\deg \left(\frac{m_{H^c}}{{\rm gcd}(m_{H^c},m_{K^c})}\right) \geq 2
~~\mbox{and}~~
\deg \left(\frac{m_{K^c}}{{\rm gcd}(m_{H^c},m_{K^c})}\right) \geq 2,\]
contradicting the fact we have a linear first syzygy.

Because $e_{m_{F^c}}$ appears on both sides of \eqref{equality}, at
least one of the $H_i$s must be $F$.  In the light
of discussion in the previous paragraph, we are forced to have
\[\frac{m_{G^c}}{{\rm gcd}(m_{F^c},m_{G^c})}e_{m_{F^c}} 
= \sum  A_{H,K} \left(\frac{m_{H^c}}{{\rm gcd}(m_{H^c},m_{K^c})}
e_{m_{K^c}} - 
\frac{m_{K^c}}{{\rm gcd}(m_{H^c},m_{K^c})}e_{m_{H^c}} \right),\]
where all the $H$ and $K$ have increment $d$.  That is,
all the linear first syzygies involving a facet 
with increment $d$ must appear together.  
But this means
that 
\[ 0 = \varphi\left(\frac{m_{G^c}}{{\rm gcd}(m_{F^c},m_{G^c})}e_{m_{F^c}}\right) = 
\frac{m_{G^c}}{{\rm gcd}(m_{F^c},m_{G^c})}m_{F^c} \neq 0,\]
which is false.  Here, $\varphi$ is the $R$-module homomorphism used to
define ${\rm Syz}_R^1(I)$.

The proof for $k=2$ is similar.  The only difference
is that  $F$ is picked to be any facet with the largest odd increment,
and we use Lemma \ref{mainlemma1} instead of Lemma \ref{mainlemma2}.
\end{proof}


\bibliographystyle{plain}

\end{document}